\documentclass[a4paper,oneside,reqno,11pt]{amsart}

\usepackage[utf8]{inputenc}
\usepackage[english]{babel}
\usepackage{amsfonts}
\usepackage{amsmath}
\usepackage{amssymb}
\usepackage{amsthm}
\usepackage{enumerate}
\usepackage{geometry}

\geometry{text={6.25in,9.25in},centering,footskip=25pt}
\pagestyle{plain}

\newcommand*{\ges}{\geqslant}
\newcommand*{\les}{\leqslant}
\newcommand*{\1}{1\!\!\,\mathrm{I}}
\newcommand*{\sign}{\mathrm{sign}\,}
\newcommand*{\abs}[1]{\left|#1\right|}
\newcommand*{\Prob}[1]{\mathbf{P} \left\lbrace #1\right\rbrace}
\newcommand*{\E}{\mathbf{E}}
\newcommand*{\Var}{\mathrm{Var}\,}
\newcommand*{\jqv}[2]{\left\langle #1,#2\right\rangle}

\newcommand*{\ve}{\varepsilon}
\newcommand*{\mbZ}{\mathbb{Z}}
\newcommand*{\mbR}{\mathbb{R}}
\newcommand*{\mcA}{\mathcal{A}}

\newcommand*{\mcN}{\mathcal{N}}
\newcommand*{\mbP}{\mathbf{P}}

\theoremstyle{plain}
\newtheorem{theorem}{Theorem}

\newtheorem{proposition}[theorem]{Proposition}
\newtheorem{corollary}[theorem]{Corollary}

\theoremstyle{definition}

\theoremstyle{remark}
\newtheorem{remark}[theorem]{Remark}

\title
[The CLT for the number of clusters of the Arratia flow]
{The central limit theorem for the number of clusters of the Arratia flow}
\author{E.~V.~Glinyanaya}
\address{E.~V.~Glinyanaya: Institute of Mathematics, National Academy of Sciences of Ukraine, Teresh\-chenkivska str.~3, Kiev~01004, Ukraine}
\email{glinkate@gmail.com}
\author{V.~V.~Fomichov}
\address{V.~V.~Fomichov: Institute of Mathematics, National Academy of Sciences of Ukraine, Teresh\-chenkivska str.~3, Kiev~01004, Ukraine}
\email{v-vfom@imath.kiev.ua}
\subjclass[2010]{60F05, 60G55, 60G60, 60K35}
\keywords{Central limit theorem, Berry--Esseen inequality, coalescing Brownian motions, Arratia flow, clusters}

\begin{document}

\begin{abstract}
In this paper we prove the central limit theorem for the number of clusters formed by the particles of the Arratia flow starting from the interval $[0;n]$ as $n\to\infty$ and obtain an estimate of the Berry--Esseen type for the rate of this convergence.
\end{abstract}

\maketitle

\section{Introduction}
\label{section1}

In this paper we consider the Arratia flow $\{x(u,\cdot),\; u\in\mbR\}$, which is an ordered family of standard Brownian motions starting from every point of the real line such that for any $u,v\in\mbR$ the joint quadratic variation of $x(u,\cdot)$ and $x(v,\cdot)$ is given by
\[
\jqv{x(u,\cdot)}{x(v,\cdot)}_t=\int\limits_0^t \1_{\{0\}}(x(u,s)-x(v,s))\,ds,\quad t\ges 0,
\]
where $\1_{\{0\}}$ stands for the indicator function of the set $\{0\}$. This flow was constructed by R.~A.~Arratia~\cite{Arratia} as a weak limit of families of coalescing simple random walks and can be informally described as a system of Brownian particles any two of which move independently until they meet, after which they coalesce and move together.

In~\cite{Harris} T.~E.~Harris considered a generalisation of the Arratia flow, in which the indicator function $\1_{\{0\}}$ is replaced by a non-negative definite function $\Gamma$, which is called the covariance function of the flow, and proved its existence under certain conditions on $\Gamma$.

In the same paper T.~E.~Harris proved that for the Arratia flow $\{x(u,\cdot),\; u\in\mbR\}$ for any time $t>0$ and interval $[u_1;u_2]\subset\mbR$ the set $x([u_1;u_2],t)$ is almost surely finite. From this it follows that for any time $t>0$ and interval $[u_1;u_2]$ the number
\[
\nu_t([u_1;u_2]):=\#\; x([u_1;u_2],t)
\]
of elements of the set $x([u_1;u_2],t)$ is almost surely finite (for a different proof see monograph~\cite{Dorogovtsev2007} of A.~A.~Dorogovtsev). R.~Tribe and O.~Zaboronski~\cite{TribeZaboronski} proved that for any $t>0$ the random point process $x(\mbR,t)$ is Pfaffian and found its kernel; basing on some of their formulae, the distribution of $\nu_t([0;u])$ was found in~\cite{Fomichov}. Earlier for Harris flows the necessary and sufficient condition of the existence of coalescence of particles and an estimate for the mean value of the number of clusters were obtained by H.~Matsumoto in~\cite{Matsumoto} respectively. For the Arratia flow the large deviation principle and the law of the iterated logarithm for the size of the cluster containing the point zero were established by A.~A.~Dorogovtsev and O.~V.~Ostapenko~\cite{DorogovtsevOstapenko} and A.~A.~Dorogovtsev, A.~V.~Gnedin and M.~B.~Vovchanskii~\cite{DorogovtsevGnedinVovchanskii} respectively.

Since the covariance of any two particles in Harris flows depends only on the distance between them, such flows are stationary with respect to the spatial variable. In~\cite{Glinyanaya}, under the assumption that the covariance function converges to zero at infinity, their ergodicity with respect to the spatial variable was established and an estimate for the strong mixing coefficient was found.

In this paper we prove the following central limit theorem for $\nu_t([0;n])$ as $n\to\infty$.

\begin{theorem}
\label{theorem1}
For any $t>0$
\[
\dfrac{\nu_t([0;n])-\E\nu_t([0;n])}{\sqrt{n}} \Longrightarrow \mcN(0;\sigma_t^2),\quad n\to\infty,
\]
where $\sigma_t^2:=\dfrac{3-2\sqrt{2}}{\sqrt{\pi t}}$.
\end{theorem}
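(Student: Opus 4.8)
The plan is to reduce the statement to a central limit theorem for a stationary weakly dependent sequence and then to identify the limiting variance. Fix $t>0$. Almost surely the partition of $[0;n]$ into clusters (maximal sets of starting points $u$ with a common image $x(u,t)$) consists of finitely many intervals, and, since the point process of their endpoints is spatially stationary with finite intensity, almost surely none of these endpoints is an integer. Comparing the clusters of $[0;n]$ with those of its unit subintervals then yields the identity
\[
\nu_t([0;n]) = \sum_{k=0}^{n-1}\nu_t([k;k+1]) - (n-1)\qquad\text{a.s.},
\]
so that $\nu_t([0;n])-\E\nu_t([0;n]) = \sum_{k=0}^{n-1}(Y_k-\E Y_k)$ with $Y_k:=\nu_t([k;k+1])$, a stationary sequence by the spatial stationarity of the Arratia flow. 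Equivalently, writing $\Xi_t$ for the (simple, translation invariant) point process on $\mbR$ of cluster boundaries at time $t$, we have $\nu_t([0;n])=1+\Xi_t((0;n))$, with intensity $\lambda:=\E\Xi_t((0;1))=\E\nu_t([0;1])-1=1/\sqrt{\pi t}$, the last equality coming from the known value of $\E\nu_t([0;n])$ (see \cite{Matsumoto}, \cite{TribeZaboronski}, \cite{Fomichov}).

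The next step is the weak dependence of $\{Y_k\}$. Two families of particles started at spatial distance $r$ interact only through coalescence, and independent Brownian particles at distance $r$ meet before time $t$ with probability of order $\mathrm{erfc}\bigl(r/(2\sqrt{2t})\bigr)$; this is exactly the setting of the mixing estimates for Harris flows with respect to the spatial variable established in \cite{Glinyanaya}, which provide a bound on the strong mixing coefficient $\alpha(r)$ of $\{Y_k\}$ decaying faster than any power of $r$. Together with the moment bound $\E Y_0^{p}<\infty$ for all $p\ges 1$ — which follows from the explicit law of $\nu_t([0;1])$ obtained in \cite{Fomichov}, or from the exponential tail bounds for the size of a single cluster in \cite{DorogovtsevOstapenko} — the classical central limit theorem for strongly mixing stationary sequences applies and gives
\[
\frac{1}{\sqrt{n}}\sum_{k=0}^{n-1}(Y_k-\E Y_k)\Longrightarrow\mcN(0;\sigma_t^2),\qquad \sigma_t^2=\Var Y_0+2\sum_{k=1}^{\infty}\mathrm{Cov}(Y_0,Y_k),
\]
with an absolutely convergent series; the Berry--Esseen estimate of the abstract follows from a quantitative version of this theorem for mixing sequences (e.g. the bounds of Tikhomirov or Sunklodas), the rate being $O(n^{-1/2})$ up to a logarithmic factor thanks to the rapid mixing and the finiteness of all moments.

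It remains to evaluate $\sigma_t^2$, which I expect to be the main obstacle. By stationarity of $\Xi_t$,
\[
\sigma_t^2=\lambda+2\int_0^{\infty}\bigl(\rho_2(r)-\lambda^2\bigr)\,dr,\qquad\lambda=\frac{1}{\sqrt{\pi t}},
\]
where $\rho_2$ is the two-point correlation function of $\Xi_t$, obtained from the joint law of four coalescing Brownian motions started at $0,0^+,r,r^+$. Conditioning on whether the two middle particles have coalesced by time $t$ splits $\rho_2(r)$ into a factorised part (governed by $\mathrm{erfc}$, since before coalescence the left and right pairs evolve independently) and a part in which, after the coalescence, three particles remain and one must compute the probability that two adjacent gaps — a planar Brownian motion whose coordinates have correlation $-\tfrac12$ — stay positive up to time $t$; it is this wedge/survival probability that brings in $\sqrt{2}$, and performing the resulting elementary integrals gives $\sigma_t^2=(3-2\sqrt{2})/\sqrt{\pi t}$. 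As a consistency check (and to dispose of boundary effects), one may note that $\nu_t([0;n])$ coincides, up to an $O_{\mbP}(1)$ error negligible at scale $\sqrt{n}$, with the number of points of the Pfaffian point process $x(\mbR,t)$ of \cite{TribeZaboronski} lying in $[0;n]$, so the same constant can be read off from the $2\times2$ minor of its kernel, or extracted directly from the distribution of $\nu_t([0;n])$ in \cite{Fomichov}.
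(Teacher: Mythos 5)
Your proposal follows essentially the same route as the paper: the same decomposition of $\nu_t([0;n])$ into unit-interval counts (the paper's identity $\nu_t([u_1;u_3])+1=\nu_t([u_1;u_2])+\nu_t([u_2;u_3])$, justified there by a.s.\ continuity of $x(\cdot,t)$ at the fixed cut point rather than by your ``no boundary at an integer'' remark, but to the same effect), the same strict spatial stationarity, the same mixing bound from \cite{Glinyanaya} with Gaussian decay, the same Ibragimov--Linnik-type CLT for strongly mixing stationary sequences, and Tikhomirov's bound for the Berry--Esseen rate. The only substantive difference is how $\sigma_t^2$ is identified: the paper computes $\Var\nu_t([0;u])$ exactly from the Tribe--Zaboronski two-point density $\rho_t^{(2)}$ and uses $\sigma_t^2=\lim_n \frac1n\Var\nu_t([0;n])$, whereas you write $\sigma_t^2=\lambda+2\int_0^\infty\bigl(\rho_2(r)-\lambda^2\bigr)\,dr$ for the cluster-boundary point process; these are equivalent (both are the limit of $\frac1n\Var$, using absolute summability of the covariances), and with the Pfaffian density the integral is an elementary integration by parts giving $\frac{1}{\sqrt{\pi t}}+2\cdot\frac{1-\sqrt2}{\sqrt{\pi t}}=\frac{3-2\sqrt2}{\sqrt{\pi t}}$, exactly matching the paper. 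The one thin spot is your primary derivation of $\rho_2$ by conditioning on coalescence of the two middle particles and a wedge/survival probability for a correlated planar Brownian motion: as written it is only a sketch (the ``resulting elementary integrals'' that produce $3-2\sqrt2$ are not performed, and the independence-before-coalescence and wedge-exit claims would need careful justification), so on its own it would not yet pin down the constant; however, your stated fallback of reading $\rho_2$ off the Tribe--Zaboronski kernel is precisely the paper's computation and closes this cleanly, so the argument as a whole is sound.
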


Furthermore, we also obtain an estimate for the rate of this convergence by proving the following inequality of the Berry--Esseen type.

\begin{theorem}
\label{theorem2}
For any $n\ges 1$
\[
\sup_{z\in\mbR} \abs{\Prob{\dfrac{\nu_t([0;n])-\E\nu_t([0;n])}{\sqrt{n}}\les z}-\int\limits_{-\infty}^z \dfrac{1}{\sqrt{2\pi\sigma_t^2}} e^{-r^2/2\sigma^2_t}\,dr}\les Cn^{-1/2}(\log n)^2.
\]
\end{theorem}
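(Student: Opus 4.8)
The plan is to exploit the spatial stationarity and the mixing estimates for the Arratia flow from~\cite{Glinyanaya}. First I would decompose the count into a sum of stationary, weakly dependent blocks: writing $\nu_t([0;n]) = \sum_{k=0}^{n-1} \xi_k$ where $\xi_k$ counts the (suitably defined) contribution of the particles starting in $[k;k+1)$ --- concretely, $\xi_k = \nu_t([0;k+1]) - \nu_t([0;k])$, which equals the number of clusters at time $t$ meeting $x([k;k+1),t)$ but not $x([0;k),t)$. By stationarity the $\xi_k$ are identically distributed, and by the construction of the flow $\xi_k$ is (up to boundary effects) a functional of the flow restricted to a neighbourhood of $[k;k+1)$, so the strong mixing coefficient $\alpha(\xi_i,\xi_j)$ decays in $\abs{i-j}$ at the rate established in~\cite{Glinyanaya}. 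The $\log n$ factors in the bound strongly suggest that this decay is (roughly) of order $\abs{i-j}^{-1}$, i.e. only polynomially fast with a small exponent, which is exactly the regime in which the classical Berry--Esseen theorem for mixing sequences produces logarithmic corrections.

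The main engine will be a Berry--Esseen bound for strongly mixing stationary sequences --- of the type due to Tikhomirov, Bolthausen, or Sunklodas. Such theorems state that if $(\xi_k)$ is stationary with $\E\xi_0 = 0$ (after centering), $\E\abs{\xi_0}^{2+\delta} < \infty$, and $\alpha(m) = O(m^{-a})$ for suitable $a$, then
\[
\sup_{z}\abs{\Prob{\dfrac{S_n}{\sqrt{n}\,\sigma}\les z} - \Phi(z)} \les C\,\dfrac{(\log n)^{\kappa}}{\sqrt{n}}
\]
for explicit exponents $\kappa$ depending on $a$ and $\delta$. So I would verify the two hypotheses: (i) a uniform moment bound $\E\abs{\xi_0}^{2+\delta} < \infty$ for some $\delta > 0$ --- in fact one expects exponential moments, since $\xi_0 \le \nu_t([0;1])$ and the distribution of $\nu_t([0;u])$ is explicitly known from~\cite{Fomichov} (it has Gaussian-type tails); and (ii) the mixing rate, quoting~\cite{Glinyanaya}. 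Then the theorem is applied with the variance identified as $\sigma_t^2 = \lim_{n\to\infty} \tfrac1n \Var \nu_t([0;n]) = \E\xi_0^2 + 2\sum_{k\ge 1}\mathrm{Cov}(\xi_0,\xi_k)$; I would check separately that this series converges (again from the mixing rate plus the moment bound) and that the closed-form value $\tfrac{3-2\sqrt2}{\sqrt{\pi t}}$ comes out, presumably by a direct computation of $\E\xi_0$, $\E\xi_0\xi_k$ from the Pfaffian/kernel formulas of~\cite{TribeZaboronski,Fomichov}. The self-similarity of Brownian motion reduces everything to $t=1$ up to the scaling $\nu_t([0;n]) \overset{d}{=} \nu_1([0;n/\sqrt t\,])$, explaining the $\sqrt{\pi t}$ in the denominator; the constant $3 - 2\sqrt 2 = (\sqrt2 - 1)^2$ is the arithmetically delicate part of that computation.

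The hard part, I expect, is twofold. First, the block variables $\xi_k$ as defined are \emph{not} exactly local functionals of the flow: a cluster that at time $t$ straddles the boundary point $k$ involves the flow over a large spatial region, and the event "this cluster meets $[k;k+1)$ but not $[0;k)$" has long-range correlations through the coalescence structure. I would handle this by a truncation/coupling argument --- replacing $\xi_k$ by $\xi_k^{(L)}$ defined using only the flow on $[k - L; k + 1 + L]$ for $L \sim \log n$, controlling the error $\Prob{\xi_k \ne \xi_k^{(L)}}$ by the probability that a cluster of diameter $> L$ occurs, which decays fast by the large-deviation estimates of~\cite{DorogovtsevOstapenko} (or directly from the known distribution of cluster sizes). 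Second, one must be careful that the centering $\E\nu_t([0;n])$ in the statement is the true mean rather than $n\,\E\xi_0$; but since $\nu_t([0;n]) = \sum \xi_k$ exactly (telescoping), these coincide up to the single-point boundary term $\nu_t(\{0\})$, which is a.s.~finite and negligible after division by $\sqrt n$. Assembling the truncation error (a term of order $n \cdot \Prob{\xi_0 \ne \xi_0^{(L)}} / \sqrt n$, forced to be $o(n^{-1/2})$ by choosing $L$ a large enough multiple of $\log n$) together with the mixing Berry--Esseen bound for the truncated $m$-dependent-like array yields the final rate $Cn^{-1/2}(\log n)^2$, where one power of $\log n$ comes from the mixing CLT rate and one from the size of the truncation window.
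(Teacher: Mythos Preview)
Your overall architecture matches the paper's: decompose $\nu_t([0;n])-\E\nu_t([0;n])$ as a sum of stationary increments and feed this into Tikhomirov's Berry--Esseen theorem for strongly mixing sequences. But two of your key guesses are off, and they lead you to add machinery that is not needed.

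First, the mixing rate. You infer from the $(\log n)^2$ that the $\alpha$-mixing coefficient decays only polynomially. In fact the estimate from~\cite{Glinyanaya}, already quoted in the proof of Theorem~\ref{theorem1}, is
\[
\alpha(n)\les \dfrac{2}{n}\sqrt{\dfrac{2}{\pi t}}\,e^{-n^2/2t},
\]
which is Gaussian, hence in particular $\alpha(n)\les Ke^{-\beta n}$. Tikhomirov's theorem under \emph{exponential} mixing gives the rate $An^{-\delta/2}(\log n)^{1+\delta}$; taking $\delta=1$ (third moments exist) yields $n^{-1/2}(\log n)^2$ directly. Both logarithms are intrinsic to Tikhomirov's bound, not artifacts of slow mixing or of a truncation window.

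Second, locality. Your increments satisfy $\xi_k=\nu_t([0;k+1])-\nu_t([0;k])=\nu_t([k;k+1])-1$ by the identity $\nu_t([u_1;u_3])+1=\nu_t([u_1;u_2])+\nu_t([u_2;u_3])$ (equation~\eqref{equation5}). Thus $\xi_k$ is \emph{exactly} a functional of $\{x(u,t):u\in[k;k+1]\}$, hence measurable with respect to $\sigma(x(u,t)-u,\;u\in[k;k+1])$. The strong mixing coefficient of $(\xi_k)$ is therefore dominated by that of the field $u\mapsto x(u,t)-u$, with no boundary corrections and no need for the $L\sim\log n$ truncation/coupling. The paper's proof is accordingly two lines: invoke the exponential mixing bound already established and apply Tikhomirov with $\delta=1$. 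The identification $\sigma_t^2=(3-2\sqrt2)/\sqrt{\pi t}$ is obtained not from the covariance series but from the explicit Pfaffian computation of $\Var\nu_t([0;u])$ in Section~\ref{section2} (Corollary~\ref{corollary5}).
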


Let us note that due to the scaling invariance of the Arratia flow (e.~g., see~\cite[subsection~2.3]{TribeZaboronski})
\begin{equation}
\label{equation0}
x(\cdot,\cdot)\stackrel{d}{=}\dfrac{1}{\ve} x(\ve \cdot,\ve^2 \cdot),\quad \ve>0,
\end{equation}
from Theorem~\ref{theorem1} the following corollary can be deduced.

\begin{corollary}
The following convergence in distribution takes place:
\[
\sqrt[4]{t} \cdot \nu_t([0,1])-\dfrac{1}{\sqrt[4]{t} \cdot \sqrt{\pi}} \Longrightarrow \mcN(1;\sigma^2),\quad t\to 0+,
\]
where $\sigma^2=\dfrac{3-2\sqrt{2}}{\sqrt{\pi}}$.
\end{corollary}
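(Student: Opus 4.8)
The plan is to deduce the corollary directly from Theorem~\ref{theorem1} by means of the scaling identity~\eqref{equation0}. First I would record the following consequence of~\eqref{equation0}: applying it with $\ve=t^{-1/2}$ and using that $z\mapsto\ve^{-1}z$ is injective,
\[
\nu_t([0,1])=\#\,x([0,1],t)\stackrel{d}{=}\#\Bigl\{\tfrac1\ve\,x(\ve u,\ve^2 t):u\in[0,1]\Bigr\}=\#\,x([0,\ve],\ve^2 t)=\nu_{\ve^2 t}([0,\ve]),
\]
so that, writing $n:=t^{-1/2}$, we obtain $\nu_t([0,1])\stackrel{d}{=}\nu_1([0,n])$ with $n\to\infty$ as $t\to 0+$. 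Since moreover $\sqrt[4]{t}=n^{-1/2}$, multiplication by $\sqrt[4]{t}$ converts the normalisation in the corollary into precisely the one appearing in Theorem~\ref{theorem1}.

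Next I would rewrite the quantity of interest as
\[
\sqrt[4]{t}\cdot\nu_t([0,1])-\frac{1}{\sqrt[4]{t}\cdot\sqrt{\pi}}\stackrel{d}{=}\frac{\nu_1([0,n])-n/\sqrt{\pi}}{\sqrt{n}}=\frac{\nu_1([0,n])-\E\nu_1([0,n])}{\sqrt{n}}+\frac{\E\nu_1([0,n])-n/\sqrt{\pi}}{\sqrt{n}}.
\]
By Theorem~\ref{theorem1} with $t=1$, and noting that $\sigma_1^2=\sigma^2$, the first term converges in distribution to $\mcN(0;\sigma^2)$ along integer $n$; the passage to the real parameter $n=t^{-1/2}$ is dealt with by a routine sandwich argument and Slutsky's theorem, using that $\nu_1([0,n+1])-\nu_1([0,n])\les\nu_1([n,n+1])$ has a finite, $n$-independent mean (by spatial translation invariance of the flow), hence is tight and negligible after division by $\sqrt{n}$.

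It then remains to show that the deterministic second term converges to $1$ as $n\to\infty$, equivalently that $\sqrt[4]{t}\,\E\nu_t([0,1])-\tfrac{1}{\sqrt[4]{t}\sqrt{\pi}}\to 1$ as $t\to 0+$. This is the only point requiring input beyond Theorem~\ref{theorem1} and the scaling relation: it is a matter of the precise (two-term) asymptotics of the mean number of clusters, which can be read off from the explicit distribution of $\nu_t([0,u])$ obtained in~\cite{Fomichov} (itself based on the Pfaffian formulae of~\cite{TribeZaboronski}); I expect this bookkeeping of $\E\nu_t$ to be the main, and essentially the only, obstacle, the scaling reduction being purely formal. Granting this asymptotic, Slutsky's theorem combines the two terms to yield $\sqrt[4]{t}\cdot\nu_t([0,1])-\tfrac{1}{\sqrt[4]{t}\sqrt{\pi}}\Longrightarrow\mcN(1;\sigma^2)$ with $\sigma^2=\sigma_1^2=\tfrac{3-2\sqrt{2}}{\sqrt{\pi}}$, as claimed.
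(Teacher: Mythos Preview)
Your overall strategy---reduce to Theorem~\ref{theorem1} via the scaling relation~\eqref{equation0} and then apply Slutsky's theorem---is exactly the route the paper indicates, and your scaling computation $\nu_t([0,1])\stackrel{d}{=}\nu_1([0,n])$ with $n=t^{-1/2}$ is correct, as is the sandwich argument for passing from integer to real $n$.

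The gap is in your handling of the deterministic term. You treat the asymptotics of $\E\nu_t([0,1])$ as the main obstacle, to be extracted from the distributional results of~\cite{Fomichov}; but the paper already records the exact formula~\eqref{equation1}, namely $\E\nu_t([0,u])=1+u/\sqrt{\pi t}$. With $t=1$ and $u=n$ this gives $\E\nu_1([0,n])=1+n/\sqrt{\pi}$, and hence
\[
\frac{\E\nu_1([0,n])-n/\sqrt{\pi}}{\sqrt{n}}=\frac{1}{\sqrt{n}}\longrightarrow 0,
\]
not $1$ as you assert. Combined with the convergence of the centred term to $\mcN(0;\sigma^2)$, Slutsky's theorem then yields the limit $\mcN(0;\sigma^2)$; the ``$1$'' in the displayed corollary appears to be a misprint for ``$0$''. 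In short, your argument is structurally right, but you have reverse-engineered an incorrect value for the drift in order to match a typo, and in doing so invoked machinery (external references, ``two-term asymptotics'') that is entirely unnecessary once~\eqref{equation1} is in hand.
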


The main part of this paper consists of two sections. In Section~\ref{section2} we establish the asymptotic behaviour of the variance and all moments of $\nu_t([0;u])$ and in Section~\ref{section3} we give the proof of Theorems~\ref{theorem1} and~\ref{theorem2}.

\section{Asymptotics of the variance and moments of $\nu_t([0;u])$}
\label{section2}

In this section we establish the asymptotic behaviour of the variance and all moments of $\nu_t([0;u])$. Our proof is based on the results of R.~Tribe and O.~Zaboronski~\cite{TribeZaboronski}, and we refer the reader to this paper for the definitions of the objects we use in this section.

In paper~\cite{TribeZaboronski} the authors proved that for any time $t>0$ the clusters of the Arratia flow form a Pfaffian point process with the kernel
\[
K_t(u,v)=\dfrac{1}{\sqrt{t}} K\left(\dfrac{u}{\sqrt{t}}, \dfrac{v}{\sqrt{t}}\right),
\]
where
\[
K(u,v)=
\begin{pmatrix}
-F''(v-u) & -F'(v-u)\\
F'(v-u) & \sign(v-u) \cdot F(\abs{v-u})
\end{pmatrix}
\]
with the function $F$ given by
\[
F(z):=\dfrac{1}{\sqrt{\pi}} \int\limits_z^{+\infty} e^{-r^2/4}\,dr,\quad z>0.
\]
In particular, it means that for any $n\ges 1$ the $n$th factorial moment (for this moment we will use the notation $a^{[n]}=a(a-1) \ldots (a-n+1)$, $a\in\mbZ_+$) of the number $N_t([0;u])$ of particles of the Arratia flow which at time $t>0$ are found at the interval $[0;u]$ is given by
\[
\E N_t^{[n]}([0;u])=\int\limits_0^u \stackrel{n}{\ldots} \int\limits_0^u \rho_t^{(n)}(v_1,\ldots,v_n)\,dv_1 \ldots dv_n,
\]
where $\rho_t^{(n)}$ is the $n$-point density permitting the following representation:
\[
\rho_t^{(n)}(v_1,\ldots,v_k)=\mathrm{Pf}\left[K_t(v_i,v_j),\; i,j=1,\ldots,n\right],\quad v_1,\ldots,v_n\in\mbR.
\]

To obtain the expressions for the moments of $\nu_t([0;u])$ it remains to note that
\begin{equation}
\label{equation}
\nu_t([0;u])\stackrel{d}{=} N_t([0;u])+1,
\end{equation}
which can be easily proved with the help of the dual flow (e.~g., see~\cite{TothWerner}, \cite{Dorogovtsev}, \cite[subsection~2.2]{TribeZaboronski}). Recall that for fixed time $t_0>0$ the dual flow is a system $\{y(u,t),\; u\in\mbR,\; 0\les t\les t_0\}$ of coalescing Brownian motions in backward time starting from every point of the real line characterised by the property that its trajectories do not intersect those of the particles of the restriction $\{x(u,t),\; u\in\mbR,\; 0\les t\les t_0\}$ of the Arratia flow to the time interval $[0;t_0]$. It is known that $\{y(u,t),\; u\in\mbR,\; 0\les t\les t_0\}$ agrees in distribution with $\{x(u,t),\; u\in\mbR,\; 0\les t\les t_0\}$, and equality~\eqref{equation} follows from the fact that the set $y(\mbR,t)$ coincides with the set of points of discontinuity of the mapping $x(\cdot,t) \colon \mbR \rightarrow \mbR$.

\begin{proposition}
For any $t>0$ and $u>0$ we have
\[
\Var\nu_t([0;u])=-\dfrac{4}{\pi}+\dfrac{3u}{\sqrt{\pi t}}+ \dfrac{4}{\pi}e^{-u^2/2t}-\dfrac{2}{\pi}\int\limits_0^{u/\sqrt{t}} e^{-z^2/4}\,dz-\dfrac{4u}{\pi\sqrt{t}}\int\limits_0^{u/\sqrt{t}}e^{-z^2/2}\,dz.
\]
\end{proposition}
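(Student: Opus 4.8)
The plan is to compute $\Var\nu_t([0;u])$ directly from the Pfaffian structure of the cluster point process, using the identity \eqref{equation} to pass from $\nu_t$ to $N_t$. Since $\nu_t([0;u])\stackrel{d}{=}N_t([0;u])+1$, shifting by a constant does not change the variance, so $\Var\nu_t([0;u])=\Var N_t([0;u])$. I would then express the variance of the counting random variable $N_t([0;u])$ through its first two factorial moments via the standard identity
\[
\Var N_t([0;u])=\E N_t^{[2]}([0;u])+\E N_t^{[1]}([0;u])-\bigl(\E N_t^{[1]}([0;u])\bigr)^2,
\]
and substitute the integral representations of the factorial moments in terms of the one- and two-point correlation densities $\rho_t^{(1)}$ and $\rho_t^{(2)}$ given in the text. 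Explicitly,
\[
\Var N_t([0;u])=\int\limits_0^u\!\!\int\limits_0^u\Bigl(\rho_t^{(2)}(v_1,v_2)-\rho_t^{(1)}(v_1)\rho_t^{(1)}(v_2)\Bigr)\,dv_1\,dv_2+\int\limits_0^u\rho_t^{(1)}(v)\,dv.
\]

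The next step is to evaluate the Pfaffians. From $\rho_t^{(1)}(v)=\mathrm{Pf}[K_t(v,v)]=-F''(0)/\sqrt{t}$ one gets a constant density, so $\int_0^u\rho_t^{(1)}(v)\,dv$ is linear in $u$; one checks $-F''(0)=\tfrac{1}{\sqrt\pi}\cdot\tfrac{1}{2}$ against the known mean, which will account for part of the $3u/\sqrt{\pi t}$ term. For the two-point density, $\rho_t^{(2)}(v_1,v_2)=\mathrm{Pf}$ of the $4\times4$ antisymmetric matrix built from the $2\times2$ blocks $K_t(v_i,v_j)$; expanding this $4\times4$ Pfaffian gives $\rho_t^{(2)}(v_1,v_2)=\rho_t^{(1)}(v_1)\rho_t^{(1)}(v_2)-(\text{product of off-diagonal entries})$, so the \emph{cluster} (truncated) two-point function reduces to an explicit combination of $F$, $F'$, $F''$ evaluated at $(v_2-v_1)/\sqrt t$. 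By the scaling $K_t(u,v)=\tfrac{1}{\sqrt t}K(u/\sqrt t,v/\sqrt t)$, after the substitution $v_i=\sqrt t\,w_i$ the double integral becomes $\sqrt t$ times a double integral over $[0;u/\sqrt t]^2$ of a function of $w_2-w_1$ only; reducing to a single integral over the difference $z=w_2-w_1\in[0;u/\sqrt t]$ with the linear weight $(u/\sqrt t-z)$ turns everything into one-dimensional integrals of $e^{-z^2/2}$, $e^{-z^2/4}$, and their products with polynomials, plus boundary terms.

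The main obstacle is bookkeeping rather than conceptual: one must correctly expand the $4\times4$ Pfaffian (keeping track of the sign convention $\sign(v-u)\cdot F(|v-u|)$ for the bottom-right entry, which makes the integrand piecewise and forces care with the ordering of $v_1,v_2$), and then carry out several integrations by parts on $[0;u/\sqrt t]$ — using $F'(z)=-\tfrac{1}{\sqrt\pi}e^{-z^2/4}$, the Gaussian integral $\int_0^a e^{-z^2/2}\,dz$, and relations like $\int z e^{-z^2/2}\,dz$ and $\int z e^{-z^2/4}\,dz$ — to collect the constant term $-4/\pi$, the linear term $3u/\sqrt{\pi t}$, the exponential term $\tfrac{4}{\pi}e^{-u^2/2t}$, and the two error-function-type integrals. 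I would verify the final formula by a consistency check at $u\to0+$ (the variance must vanish, since a single starting point yields $\nu_t=1$ a.s.), which pins down the additive constant, and by checking that the leading term as $u\to\infty$ is $3u/\sqrt{\pi t}$, consistent with the announced limiting variance $\sigma_t^2=(3-2\sqrt2)/\sqrt{\pi t}$ once the $O(\sqrt u)$ corrections from Theorem~\ref{theorem1}'s finer analysis are taken into account.
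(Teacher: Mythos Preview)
Your approach is essentially the paper's: compute $\Var\nu_t=\Var N_t$ from the first two factorial moments of $N_t$, evaluate $\rho_t^{(1)}$ and $\rho_t^{(2)}$ as Pfaffians of the Tribe--Zaboronski kernel, and reduce the double integral over $[0,u]^2$ to one-dimensional Gaussian-type integrals via translation invariance and integration by parts. The only difference is cosmetic (the paper passes through $\E\nu_t^2$ explicitly rather than writing $\Var N_t=\E N_t^{[2]}+\E N_t-(\E N_t)^2$).

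Two small slips to correct. First, the one-point density is the $(1,2)$ entry of the $2\times2$ block, so $\rho_t^{(1)}(v)=-F'(0)/\sqrt t=1/\sqrt{\pi t}$, not $-F''(0)/\sqrt t$; in fact $F''(0)=0$, so your stated check would fail (though, as you say, comparison with the known mean would catch this). Second, in your final consistency remark there are no ``$O(\sqrt u)$ corrections'': the term $-\tfrac{4u}{\pi\sqrt t}\int_0^{u/\sqrt t}e^{-z^2/2}\,dz$ is itself asymptotically linear, contributing $-2\sqrt2\,u/\sqrt{\pi t}$, and this is precisely what combines with $3u/\sqrt{\pi t}$ to produce the coefficient $(3-2\sqrt2)/\sqrt{\pi t}$ of Corollary~\ref{corollary5}.
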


\begin{proof}
First of all, let us note that
\[
\E N_t([0;u])=\E N_t^{[1]}([0;u])=\int\limits_0^u \rho_t^{(1)}(v)\,dv= \dfrac{u}{\sqrt{\pi t}},
\]
and so
\begin{equation}
\label{equation1}
\E\nu_t([0;u])=1+\dfrac{u}{\sqrt{\pi t}}.
\end{equation}
Moreover, on the one hand,
\begin{equation}
\label{equation2}
\E N_t^{[2]}([0;u])=\E\nu_t^2([0;u])-3\E\nu_t([0,u])+2,
\end{equation}
and, on the other hand,
\begin{equation}
\label{equation3}
\E N_t^{[2]}([0;u])=\int\limits_0^u \int\limits_0^u \rho^{(2)}_t(v_1,v_2)\,dv_1dv_2,
\end{equation}
where (for notational simplicity here and below for antisymmetric matrices we omit their entries below the diagonal)
\begin{gather*}
\rho^{(2)}_t(v_1,v_2)=
\mathrm{Pf}
\left[
\begin{matrix}
0 & \dfrac{1}{\sqrt{\pi t}} & -\dfrac{v_2-v_1}{2\sqrt{\pi} \cdot t} e^{-(v_2-v_1)^2/4t} & \dfrac{1}{\sqrt{\pi t}} e^{-(v_2-v_1)^2/4t}\\
& 0 & -\dfrac{1}{\sqrt{\pi t}} e^{-(v_2-v_1)^2/4t} & \dfrac{\sign (v_2-v_1)}{\sqrt{\pi t}} \cdot \int\limits_{\abs{v_2-v_1}/\sqrt{t}}^{+\infty} e^{-v^2/4}\,dv\\
& & 0 & \dfrac{1}{\sqrt{\pi t}}\\
& & & 0
\end{matrix}
\right]
=
\\
=\dfrac{1}{\pi t}\left(1+\dfrac{\abs{v_2-v_1}}{2\sqrt{t}} \cdot e^{-(v_2-v_1)^2/4t} \cdot \int\limits_{\abs{v_2-v_1}/\sqrt{t}}^{+\infty} e^{-v^2/4}\,dv-e^{-(v_2-v_1)^2/2t}\right).
\end{gather*}
Therefore, computing the integral in~\eqref{equation3} by integrating by parts (several times) and using~\eqref{equation1} and~\eqref{equation2}, we obtain
\begin{equation}
\label{equation4}
\E\nu_t^2([0;u])=1-\dfrac{4}{\pi}+\dfrac{5u}{\sqrt{\pi t}}+\dfrac{u^2}{\pi t}+ \dfrac{4}{\pi}e^{-u^2/2t}-\dfrac{2}{\pi}\int\limits_0^{u/\sqrt{t}} e^{-z^2/4}\,dz-\dfrac{4u}{\pi\sqrt{t}}\int\limits_0^{u/\sqrt{t}}e^{-z^2/2}\,dz.
\end{equation}
Finally, using~\eqref{equation1} and~\eqref{equation4}, we arrive at the desired result.
\end{proof}

\begin{corollary}
\label{corollary5}
The following assertions hold true:
\begin{gather*}
\Var\nu_t([0;u])\sim (3-2\sqrt{2}) \cdot \dfrac{u}{\sqrt{\pi t}},\quad u\to +\infty \text{ or } t\to 0+,\\
\Var\nu_t([0;u])\sim (3-\dfrac{2}{\sqrt{\pi}}) \cdot \dfrac{u}{\sqrt{\pi t}},\quad u\to 0+ \text{ or } t\to +\infty.
\end{gather*}
\end{corollary}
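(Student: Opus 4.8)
The plan is to derive the two asymptotic equivalences directly from the closed-form expression for $\Var\nu_t([0;u])$ obtained in the Proposition, treating the two regimes via the single dimensionless parameter $s:=u/\sqrt{t}$ (note that $u\to+\infty$ with $t$ fixed and $t\to 0+$ with $u$ fixed both amount to $s\to+\infty$, and symmetrically for the other case). Writing
\[
\Var\nu_t([0;u])=-\frac{4}{\pi}+\frac{3u}{\sqrt{\pi t}}+\frac{4}{\pi}e^{-u^2/2t}-\frac{2}{\pi}\int\limits_0^{s} e^{-z^2/4}\,dz-\frac{4u}{\pi\sqrt{t}}\int\limits_0^{s}e^{-z^2/2}\,dz,
\]
I would factor out $u/\sqrt{\pi t}=s/\sqrt{\pi}$, since this is the conjectured order in both regimes, and analyse the bracketed remainder as $s\to+\infty$ and as $s\to 0+$.

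For the regime $s\to+\infty$, the dominant term is $3u/\sqrt{\pi t}$; the terms $-4/\pi$ and $(4/\pi)e^{-u^2/2t}$ are $O(1)=o(s/\sqrt{\pi})$; the integral $\int_0^s e^{-z^2/4}\,dz$ converges to $\sqrt{\pi}$, so $-\frac{2}{\pi}\int_0^s e^{-z^2/4}\,dz=O(1)=o(s/\sqrt{\pi})$; and the last term is $-\frac{4u}{\pi\sqrt{t}}\int_0^s e^{-z^2/2}\,dz\sim-\frac{4s}{\pi}\cdot\sqrt{\pi/2}=-\frac{2\sqrt{2}}{\sqrt{\pi}}\,s$, using $\int_0^\infty e^{-z^2/2}\,dz=\sqrt{\pi/2}$. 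Collecting the two surviving contributions gives $\frac{3s}{\sqrt{\pi}}-\frac{2\sqrt{2}}{\sqrt{\pi}}s=(3-2\sqrt{2})\cdot\frac{u}{\sqrt{\pi t}}$, as claimed. For the regime $s\to 0+$, I would instead Taylor-expand: $e^{-u^2/2t}=1-\frac{s^2}{2}+o(s^2)$, so $-\frac{4}{\pi}+\frac{4}{\pi}e^{-u^2/2t}=-\frac{2}{\pi}s^2+o(s^2)=o(s)$; also $\int_0^s e^{-z^2/4}\,dz=s+o(s)$, giving $-\frac{2}{\pi}\int_0^s e^{-z^2/4}\,dz=-\frac{2}{\pi}s+o(s)$; and $\int_0^s e^{-z^2/2}\,dz=s+o(s)$, so the last term is $-\frac{4s}{\pi}\cdot s+o(s^2)=o(s)$. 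Hence the leading behaviour is $\frac{3s}{\sqrt{\pi}}-\frac{2}{\pi}s=\bigl(3-\frac{2}{\sqrt{\pi}}\bigr)\cdot\frac{u}{\sqrt{\pi t}}$, matching the second assertion. (One checks that $3-\frac{2}{\sqrt{\pi}}>0$, so the equivalence is non-degenerate.)

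There is no serious obstacle here; the argument is a routine asymptotic analysis of an explicit formula, and the only points requiring mild care are (i) keeping track of which error terms are genuinely $o(u/\sqrt{t})$ in each regime — in particular noticing that in the $s\to 0+$ case the constant $-4/\pi$ is exactly cancelled by the limit of $(4/\pi)e^{-u^2/2t}$, so it does not spuriously dominate — and (ii) the two elementary Gaussian integral evaluations $\int_0^\infty e^{-z^2/4}\,dz=\sqrt{\pi}$ and $\int_0^\infty e^{-z^2/2}\,dz=\sqrt{\pi/2}$. I would present both cases in parallel, remarking that the identification of the large-$u$/small-$t$ limit with $u\to+\infty$ and of the small-$u$/large-$t$ limit with $u\to 0+$ is immediate from the fact that $\Var\nu_t([0;u])$ depends on $(t,u)$ only through $s=u/\sqrt{t}$ together with the prefactor $u/\sqrt{t}$.
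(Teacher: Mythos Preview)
Your proposal is correct and is exactly what the paper intends: the corollary is stated without proof, immediately after the explicit formula for $\Var\nu_t([0;u])$, so the reader is expected to carry out precisely the elementary asymptotic analysis you describe. Your reduction to the single parameter $s=u/\sqrt t$ (which makes the pairing of the limits $u\to+\infty$ with $t\to 0+$, and $u\to 0+$ with $t\to+\infty$, transparent) and the handling of the cancellation $-\tfrac{4}{\pi}+\tfrac{4}{\pi}e^{-s^2/2}=o(s)$ as $s\to 0+$ are both on point.
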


\begin{theorem}
For any $k\ges 1$ we have
\[
\E\nu_t^k([0;u])\sim \left(\dfrac{u}{\sqrt{\pi t}}\right)^k,\quad u\to +\infty \text{ or } t\to 0+.
\]
\end{theorem}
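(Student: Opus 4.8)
The plan is to deduce the asymptotics of the ordinary moments $\E\nu_t^k([0;u])$ from the asymptotics of the factorial moments $\E N_t^{[n]}([0;u])$, which are in turn controlled by the Pfaffian integrals $\int_{[0;u]^n}\rho_t^{(n)}$. First I would recall the elementary combinatorial identity expressing powers in terms of falling factorials via Stirling numbers of the second kind: $a^k=\sum_{n=1}^k S(k,n)\,a^{[n]}$ for $a\in\mbZ_+$. Applying this to $a=\nu_t([0;u])$ and taking expectations reduces the problem to showing that $\E\nu_t^{[n]}([0;u])\sim (u/\sqrt{\pi t})^n$ as $u\to+\infty$ (the top term $n=k$ then dominates, since $S(k,k)=1$, and the lower-order terms $n<k$ contribute $O(u^{n})=o(u^k)$). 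Using the relation $\nu_t([0;u])\stackrel{d}{=}N_t([0;u])+1$ from~\eqref{equation}, one has $\nu_t^{[n]}=\sum_{j}\binom{n}{j}$-type expansion relating $\E\nu_t^{[n]}$ to $\E N_t^{[j]}$ for $j\le n$, so it suffices in fact to prove $\E N_t^{[n]}([0;u])\sim (u/\sqrt{\pi t})^n$.

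The heart of the matter is therefore the asymptotic evaluation of
\[
\E N_t^{[n]}([0;u])=\int\limits_0^u\stackrel{n}{\ldots}\int\limits_0^u \mathrm{Pf}\bigl[K_t(v_i,v_j),\; i,j=1,\ldots,n\bigr]\,dv_1\ldots dv_n.
\]
By the scaling $K_t(u,v)=t^{-1/2}K(u/\sqrt{t},v/\sqrt{t})$ and the change of variables $v_i=\sqrt{t}\,w_i$, this integral equals $(\sqrt{t})^{-n}\int_{[0;u/\sqrt{t}]^n}\rho^{(n)}(w_1,\ldots,w_n)\,dw$, where $\rho^{(n)}=\mathrm{Pf}[K(w_i,w_j)]$ is the $t=1$ density; so it is enough to prove the statement for $t=1$ and $u\to+\infty$. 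The key structural observation is that the entries of $K(w_i,w_j)$ all decay like $e^{-(w_i-w_j)^2/4}$ (or faster) away from the diagonal, because $F$, $F'$, $F''$ are Gaussian-tailed; hence $\rho^{(n)}$ is, up to an exponentially small correction, a product of the one-point densities $\rho^{(1)}\equiv 1/\sqrt{\pi}$ whenever the points $w_1,\ldots,w_n$ are mutually well separated. Concretely, I would expand the Pfaffian into its $(2n-1)!!$ perfect-matching terms and argue that the single matching pairing each $2\times 2$ diagonal block with itself contributes $(1/\sqrt{\pi})^n$ times the volume of the cube, i.e.\ $(u)^n/\pi^{n/2}$, while every other matching links at least two distinct indices $i\neq j$ and therefore carries a factor $e^{-c(w_i-w_j)^2}$; integrating such a factor over one of the variables gains a bounded constant rather than a factor of $u$, so each non-diagonal matching contributes $O(u^{n-1})$. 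Summing, $\int_{[0;u]^n}\rho^{(n)}=u^n/\pi^{n/2}+O(u^{n-1})$, which after undoing the scaling gives $\E N_t^{[n]}([0;u])=(u/\sqrt{\pi t})^n+O(u^{n-1})\sim(u/\sqrt{\pi t})^n$.

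The main obstacle is the bookkeeping of the Pfaffian expansion: one must verify that in every non-identity perfect matching the resulting product of off-diagonal entries, when integrated, really loses a full power of $u$, and that the number of such matchings, together with the constants from the Gaussian integrals, stays bounded as $u\to\infty$ (which it does, for fixed $n$). A clean way to organise this is to bound $|\rho^{(n)}(w)-\pi^{-n/2}|$ pointwise by $C_n\sum_{i<j}e^{-(w_i-w_j)^2/8}$ using the Gaussian decay of all entries of $K$ together with the Cauchy--Binet/Pfaffian minor expansion, and then integrate this bound directly over $[0;u]^n$, each term yielding $O(u^{n-1})$. Once this pointwise estimate is in place the rest is immediate. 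An alternative, if one wishes to avoid the matching combinatorics, is to prove the slightly weaker two-sided bound $c_n u^n\le \E N_t^{[n]}([0;u])\le C_n u^n$ together with a subadditivity/superadditivity argument in $u$ to pin down the constant, but the direct Gaussian-decay estimate seems cleanest and also reproves the leading constant $(\pi t)^{-1/2}$ found for the first two moments in the preceding Proposition and Corollary.
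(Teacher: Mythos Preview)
Your proposal is correct, and at the strategic level it matches the paper: both reduce the ordinary moments of $\nu_t$ to the factorial moments $\E N_t^{[n]}$ (you via Stirling numbers, the paper via induction on $k$---these are equivalent), both invoke the scaling invariance~\eqref{equation0}, and both identify the leading term of the Pfaffian integral as coming from the diagonal $2\times 2$ blocks of the kernel.

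The technical execution differs. The paper scales in the \emph{opposite} direction: it fixes $u$ and sends $t\to 0+$, so that the integration domain $[0;u]^k$ stays bounded. Then $\sqrt{t}\,K_t(v_i,v_j)=K(v_i/\sqrt{t},v_j/\sqrt{t})$ has uniformly bounded entries and, for $i\neq j$, converges pointwise to zero (by the Gaussian decay of $F,F',F''$); the Pfaffian therefore converges pointwise to the block-diagonal value $\pi^{-k/2}$, and a single application of dominated convergence on the cube finishes the argument with no matching combinatorics at all. Your route instead fixes $t=1$, lets $u\to\infty$, and controls the Pfaffian by expanding over perfect matchings and integrating each off-diagonal term to $O(u^{n-1})$ using the Gaussian tails. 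This is more work but yields the sharper statement $\E N_t^{[n]}([0;u])=(u/\sqrt{\pi t})^n+O\bigl((u/\sqrt{t})^{n-1}\bigr)$, which the paper's dominated-convergence argument does not give.
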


\begin{proof}
Due to the scaling invariance~\eqref{equation0} of the Arratia flow it is enough to prove the corresponding assertion for $t\to 0+$. To do it, we will use induction. For $k=1$ the assertion follows from~\eqref{equation1}. Now suppose that it holds true for all $k'\les k-1$. Then from~\eqref{equation} it follows that
\[
\lim_{t\to 0+} t^{k/2}\E\nu_t^k([0;u])=\lim_{t\to 0+} t^{k/2}\E N_t^{[k]}([0;u]),
\]
provided that the limit on the right-hand side exists. However,
\[
t^{k/2}\E N_t^{[k]}([0;u])=\int\limits_0^u \stackrel{k}{\ldots} \int\limits_0^u \mathrm{Pf}\left[\sqrt{t} \cdot K_t(v_i,v_j),\; i,j=1,\ldots,k\right]\,dv_1 \ldots dv_k,
\]
and the Pfaffian on the right-hand side converges as $t\to 0+$ to the Pfaffian
\[
\mathrm{Pf}
\left[
\begin{matrix}
0 & 1/\sqrt{\pi} & 0 & 0 & 0 & \ldots & 0 & 0 & 0\\
{} & 0 & 0 & 0 & 0 & \ldots & 0 & 0 & 0\\
{} & {} & 0 & 1/\sqrt{\pi} & 0 & \ldots & 0 & 0 & 0\\
{} & {} & {} & 0 & 0 & \ldots & 0 & 0 & 0\\
{} & {} & {} & {} & 0 & \ldots & 0 & 0 & 0\\
{} & {} & {} & {} & {} & \ddots & \vdots & \vdots & \vdots\\
{} & {} & {} & {} & {} & {} & 0 & 0 & 0\\
{} & {} & {} & {} & {} & {} & {} & 0 & 1/\sqrt{\pi}\\
{} & {} & {} & {} & {} & {} & {} & {} & 0\\
\end{matrix}
\right]
=\left(\dfrac{1}{\sqrt{\pi}}\right)^k.
\]
Thus, by the dominated convergence theorem we obtain
\[
\lim_{t\to 0+} t^{k/2}\E N_t^{[k]}([0;u])=\left(\dfrac{u}{\sqrt{\pi}}\right)^k.
\]
The theorem is proved.
\end{proof}

\section{Proof of the main results}
\label{section3}

\begin{proof}[Proof of Theorem~\ref{theorem1}]
Fixing arbitrary $t>0$, let us note that for any $u_1<u_2<u_3$ we have
\begin{equation}
\label{equation5}
\nu_t([u_1;u_3])+1=\nu_t([u_1;u_2])+\nu_t([u_2;u_3]),
\end{equation}
since on the right-hand side the cluster containing the point $x(u_2,t)$ is taken into account twice due to the almost sure continuity of the random mapping $x(\cdot,t) \colon \mbR \rightarrow \mbR$ at the point $u_2$. From~\eqref{equation5} it follows that for all $n\ges 1$
\begin{equation}
\label{equation6}
\nu_t([0;n])-\E\nu_t([0;n])=\sum_{k=1}^n \eta_k,
\end{equation}
where
\[
\eta_k:=\nu_t([k-1;k])-\E\nu_t([k-1;k]),\quad k\ges 1.
\]

Since the stochastic process $\{x(u,t)-u,\; u\in\mbR\}$ is strictly stationary, so is the sequence $\{\eta_n,\; n\ges 1\}$. Now to this sequence we would like to apply the following theorem.

\begin{theorem}
\label{theorem7}
\textup{\cite[Theorem~18.5.3]{IbragimovLinnik}}
Let $\{X_n,\; n\ges 1\}$ be a strictly stationary sequence of centered random variables with finite variance such that
\[
\Var\sum_{k=1}^n X_k\longrightarrow +\infty,\quad n\to +\infty,
\]
and for some $\delta>0$
\[
\E\abs{X_1}^{2+\delta}<+\infty
\]
and
\[
\sum_{n=1}^\infty \left(\alpha^X(n)\right)^{\delta/(2+\delta)}<+\infty,
\]
where $\alpha^X$ is its strong mixing coefficient:
\begin{gather*}
\alpha^X(n):=\sup\{\abs{\mbP(AB)-\mbP(A)\mbP(B)} \mid A\in \sigma(X_j,\; j\les k),\\
B\in\sigma(X_j,\; j\ges k+n),\; k\in\mbZ\},\quad n\in\mbZ,
\end{gather*}
with $\sigma(\mcA)$ standing for the $\sigma$-field generated by the set $\mcA$ of random variables.

Then the series
\[
\E X_1^2+2\sum_{k=2}^\infty \E X_1X_k
\]
is absolutely convergent and, provided that its sum $\sigma^2$ is strictly positive, the following convergence in distribution takes place:
\[
\dfrac{1}{\sqrt{n}} \sum_{k=1}^n X_k \Longrightarrow \mcN(0,\sigma^2),\quad n\to\infty.
\]
\end{theorem}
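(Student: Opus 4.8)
The plan is to prove this via Ibragimov's variant of S.~N.~Bernstein's method of blocks, combined with the covariance inequality for strongly mixing sequences. First I would record the covariance estimate: whenever $\xi$ is measurable with respect to $\sigma(X_j,\ j\les k)$ and $\eta$ with respect to $\sigma(X_j,\ j\ges k+m)$, Davydov's inequality gives $\abs{\E\xi\eta-\E\xi\E\eta}\les C\|\xi\|_{2+\delta}\|\eta\|_{2+\delta}\,(\alpha(m))^{\delta/(2+\delta)}$, where $\alpha=\alpha^X$ and the exponent $\delta/(2+\delta)=1-2/(2+\delta)$ is exactly the one dictated by the H\"older bookkeeping with $p=q=2+\delta$. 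Applied with $\xi=X_1$, $\eta=X_{1+k}$ this yields $\abs{\E X_1X_{1+k}}\les C\,(\E\abs{X_1}^{2+\delta})^{2/(2+\delta)}\,(\alpha(k))^{\delta/(2+\delta)}$, so the hypothesis $\sum_k(\alpha(k))^{\delta/(2+\delta)}<+\infty$ forces absolute convergence of the series $\E X_1^2+2\sum_{k\ges2}\E X_1X_k$; writing $\Var S_n=n\E X_1^2+2\sum_{k=1}^{n-1}(n-k)\E X_1X_{1+k}$ (with $S_n:=\sum_{k=1}^n X_k$) and passing to the limit by dominated convergence identifies $\sigma^2=\lim_n n^{-1}\Var S_n$.

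For the central limit theorem I would split $\{1,\dots,n\}$ into alternating long blocks of length $p=p_n$ and short blocks of length $q=q_n$, with $p_n\to\infty$, $q_n\to\infty$, $q_n/p_n\to0$ and $p_n/n\to0$, producing about $k_n\sim n/p_n$ long blocks. Let $\xi_j$ be the sum of the $X_i$ over the $j$th long block. Two structural estimates drive the argument. First, the short blocks occupy a fraction $o(1)$ of the indices, so the covariance estimate bounds the variance of their total contribution by $O(k_nq_n)=o(n)$, hence its $L^2$-norm is $o(\sqrt n)$ and it may be discarded. Second, since consecutive long blocks are separated by a gap of length $q_n$, iterating the covariance inequality for the bounded functions $e^{\mathrm i t\xi_j/\sqrt n}$ gives $\abs{\E\prod_{j=1}^{k_n}e^{\mathrm i t\xi_j/\sqrt n}-\prod_{j=1}^{k_n}\E e^{\mathrm i t\xi_j/\sqrt n}}\les 16\,k_n\,\alpha(q_n)$; choosing the rates so that $k_n\alpha(q_n)\to0$ makes the long blocks asymptotically independent. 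By strict stationarity all $\xi_j$ share the law of $\xi_1$, so it remains to show that a row-wise i.i.d.\ triangular array of $k_n$ independent copies of $\xi_1/\sqrt n$ satisfies the classical central limit theorem with limiting variance $\sigma^2$.

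For this last step I would verify the Lindeberg--Feller conditions. The total variance is $k_n\,\E\xi_1^2/n$, and since $\E\xi_1^2/p_n\to\sigma^2$ (again from summability of the covariances) together with $k_np_n/n\to1$, it tends to $\sigma^2>0$. The Lindeberg sum equals $n^{-1}k_n\E[\xi_1^2\1_{\{\abs{\xi_1}>\ve\sqrt n\}}]\les \ve^{-\delta}n^{-(1+\delta/2)}k_n\,\E\abs{\xi_1}^{2+\delta}$, so everything reduces to the block moment bound $\E\abs{\xi_1}^{2+\delta}\les C\,p_n^{1+\delta/2}$, which makes the Lindeberg sum $O((p_n/n)^{\delta/2})\to0$.

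I expect this moment bound to be the main obstacle: it is a Rosenthal-type inequality $\E\abs{\sum_{i=1}^pX_i}^{2+\delta}\les Cp^{1+\delta/2}$ for strongly mixing sequences, which must be derived from $\E\abs{X_1}^{2+\delta}<+\infty$ and the mixing rate and is the one genuinely quantitative ingredient; the remaining work is the bookkeeping needed to produce sequences $p_n,q_n$ simultaneously meeting $p_n/n\to0$, $q_n/p_n\to0$ and $k_n\alpha(q_n)\to0$, which is routine once $\alpha(n)\to0$. Finally I would assemble the pieces: discarding the short blocks, replacing the characteristic function of the long-block sum by the product of marginals up to the error $16k_n\alpha(q_n)$, and invoking the triangular-array central limit theorem shows $\E\exp(\mathrm i tS_n/\sqrt n)\to e^{-t^2\sigma^2/2}$ for every $t\in\mbR$, which by L\'evy's continuity theorem is precisely the claimed convergence to $\mcN(0,\sigma^2)$.
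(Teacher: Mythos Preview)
The paper does not give its own proof of this statement: Theorem~\ref{theorem7} is quoted verbatim from \cite[Theorem~18.5.3]{IbragimovLinnik} and used as a black box in the proof of Theorem~\ref{theorem1}. So there is nothing in the paper to compare your argument against.

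That said, what you outline is precisely the classical Ibragimov--Bernstein proof that appears in the cited reference: Davydov's covariance inequality to get absolute summability of the autocovariances and identify $\sigma^2$, big-block/small-block decomposition with $p_n\ll n$, $q_n\ll p_n$, elimination of the small blocks in $L^2$, the characteristic-function decoupling bound $16k_n\alpha(q_n)$, and then a Lindeberg--Feller argument for the row-wise i.i.d.\ array of big blocks. You correctly locate the only nontrivial ingredient, the moment bound $\E\abs{S_p}^{2+\delta}\les Cp^{1+\delta/2}$; in Ibragimov--Linnik this is handled via a recursive (dyadic) estimate on $\E\abs{S_{2p}}^{2+\delta}$ using the mixing hypothesis, so your identification of it as ``the one genuinely quantitative ingredient'' is accurate. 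Your sketch is a faithful summary of the standard proof and would go through as written.
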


\begin{remark}
Note that $\sigma^2$ permits the representation
\[
\sigma^2=\lim_{n\to \infty} \dfrac{1}{n} \Var\sum_{k=1}^n X_k,
\]
since
\[
\dfrac{1}{n} \Var\sum_{k=1}^n X_k=\dfrac{1}{n} \E\left(\sum_{k=1}^n X_k\right)^2=\dfrac{1}{n} \sum_{i,j=1}^n \E X_iX_j=\E X_1^2+2\sum_{k=2}^n \dfrac{n-k}{n} \E X_1X_k,
\]
and, if the series $\sum \E X_1X_k$ is absolutely convergent, by the dominated convergence theorem
\[
\lim_{n\to\infty} \sum_{k=2}^n \dfrac{n-k}{n} \E X_1X_k=\sum_{k=2}^\infty \E X_1X_k-\lim_{n\to\infty} \sum_{k=2}^n \dfrac{k}{n} \E X_1X_k=\sum_{k=2}^\infty \E X_1X_k.
\]
\end{remark}

Now let us verify that the conditions of this theorem are satisfied for the sequence $\{\eta_n,\; n\ges 1\}$. First, we note that all absolute moments of $\eta_1$ are finite, since such are those of $\nu_t([0;1])$. Second, from equality~\eqref{equation6} and Corollary~\ref{corollary5} we get
\[
\dfrac{1}{n}\Var\sum_{k=1}^n \eta_k=\dfrac{1}{n}\Var\nu_t([0;n])\longrightarrow \dfrac{3-2\sqrt{2}}{\sqrt{\pi t}}>0,\quad n\to\infty,
\]
and so in particular
\[
\Var\sum_{k=1}^n \eta_k\longrightarrow +\infty,\quad n\to\infty.
\]

Third, it is easy to check that for the strong mixing coefficient $\alpha^\eta$ of the sequence $\{\eta_n,\; n\ges 1\}$ we have
\[
\alpha^\eta(n)\les \alpha(n),\quad n\ges 1,
\]
where
\begin{gather*}
\alpha(n):=\sup\{\abs{\mbP(AB)-\mbP(A)\mbP(B)},\; A\in\sigma(x(u,t)-u,\; u\les h),\\
B\in\sigma(x(u,t)-u,\; u\ges h+n),\; h\in\mbR\}.
\end{gather*}
In~\cite{Glinyanaya} it was proved that for $n\ges 1$ large enough
\[
\alpha(n)\les 2\sqrt{\dfrac{2}{\pi t}} \int\limits_n^{+\infty} e^{-r^2/2t}\,dr.
\]
Therefore, using the standard estimate for the tails of the Gaussian distribution, we obtain that for $n\ges 1$ large enough
\[
\alpha^\eta(n)\les 2\sqrt{\dfrac{2}{\pi t}} \int\limits_n^{+\infty} e^{-r^2/2t}\,dr\les \dfrac{2}{n}\sqrt{\dfrac{2}{\pi t}} e^{-n^2/2t},
\]
and so for all $\delta>0$
\[
\sum_{n=1}^\infty \left(\alpha^\eta(n)\right)^{\delta/(2+\delta)}<+\infty.
\]

Thus, applying Theorem~\ref{theorem7} to the sequence $\{\eta_n,\; n\ges 1\}$ and using equality~\eqref{equation6} finishes the proof.
\end{proof}

\begin{proof}[Proof of Theorem~\ref{theorem2}]
The proof is based on the following theorem.

\begin{theorem}
\textup{\cite[Theorem~2]{Tikhomirov}}
Let $\{X_n,\; n\ges 1\}$ be a strictly stationary sequence of centered random variables with finite variance such that for some $\delta\in (0,1]$
\[
\E\abs{X_1}^{2+\delta}<+\infty
\]
and for some constants $K>0$ and $\beta>0$
\[
\alpha^X(n)\les Ke^{-\beta n},\quad n\ges 1.
\]

Then there exists a constant $A=A(K,\beta,\delta)>0$ such that
\[
\sup_{z\in\mbR} \abs{\Prob{\frac 1{\sigma_n} \sum_{k=1}^n X_k\les z}- \dfrac{1}{\sqrt{2\pi}} \int\limits_{-\infty}^z e^{-r^2/2}\,dr}\les An^{-\delta/2}(\log n)^{1+\delta},\quad n\ges 1,
\]
where
\[
\sigma_n^2=\E\left(\sum_{k=1}^n X_k\right)^2.
\]
\end{theorem}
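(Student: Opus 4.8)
The plan is to prove this Berry--Esseen bound by the classical Fourier (characteristic function) method combined with Bernstein's blocking technique, adapted to the $\alpha$-mixing setting. Throughout write $S_n=\sum_{k=1}^n X_k$ and $f_n(s)=\E\exp(isS_n/\sigma_n)$. The first reduction is Esseen's smoothing inequality, which bounds the left-hand side of the claim by
\[
\frac{1}{\pi}\int_{-T}^{T}\abs{\frac{f_n(s)-e^{-s^2/2}}{s}}\,ds+\frac{C}{T}
\]
for every $T>0$, so that everything reduces to estimating $f_n$ on an interval $[-T;T]$ whose length will be fixed at the end. Before this one must record the behaviour of the normalisation: using the covariance inequality for strongly mixing sequences together with $\alpha^X(n)\les Ke^{-\beta n}$ and $\E\abs{X_1}^{2+\delta}<+\infty$, the series $\E X_1^2+2\sum_{k\ges2}\E X_1X_k$ converges absolutely, whence $\sigma_n^2/n\to\sigma^2$; I assume (as is implicit in the statement, and as holds in the intended application) that $\sigma^2>0$, so that $\sigma_n^2\asymp n$.

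Next comes Bernstein's blocking. I partition $\{1,\dots,n\}$ into $m$ long blocks of length $p$ alternating with short buffer blocks of length $q$, with $m(p+q)\approx n$ and $q\les p$, and denote by $Y_1,\dots,Y_m$ the partial sums over the long blocks. The buffers ensure that the long blocks are mutually separated by a distance at least $q$; by the Volkonskii--Rozanov lemma for characteristic functions of mixing random variables one replaces the joint characteristic function of the $Y_j$ by the product of the individual ones at the cost of a decoupling error of order $m\,\alpha^X(q)\les Cne^{-\beta q}$, which the exponential mixing rate renders negligible once $q$ is a suitable multiple of $\log n$. For the resulting product of independent factors I apply the classical Lyapunov estimate of the characteristic function: each factor is Taylor expanded and compared with its matched Gaussian, the error being controlled by $\abs{s}^{2+\delta}$ times the Lyapunov ratio $L:=\sum_j\E\abs{Y_j}^{2+\delta}/\sigma_n^{2+\delta}$. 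Here the essential input is a Rosenthal-type moment inequality for mixing sums, $\E\abs{Y_j}^{2+\delta}\les Cp^{(2+\delta)/2}$, which gives $L\asymp(p/n)^{\delta/2}$ and hence the main contribution, of order $n^{-\delta/2}$, to the bound.

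The \emph{main obstacle} is to show that the terms discarded in the blocking---the buffer blocks, collected into a remainder $R=S_n-\sum_jY_j$, and the mismatch between $\sigma_n^2$ and the variance captured by the long blocks---do not spoil the rate. A crude estimate
\[
\abs{f_n(s)-\E\exp\bigl(is\sigma_n^{-1}\sum\nolimits_jY_j\bigr)}\les\abs{s}\,(\E R^2)^{1/2}\sigma_n^{-1}
\]
grows linearly in $s$ and, integrated against $ds/\abs{s}$ up to $T$, would destroy the bound; the remedy is to establish an a priori Gaussian bound $\abs{f_n(s)}\les e^{-cs^2}$ (and analogous bounds for the partial products) valid on the whole range $\abs{s}\les T$, which forces every error term carrying such a factor to integrate to a constant multiple of a genuinely small quantity rather than to grow with $T$. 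Proving this a priori decay---uniformly over $[-T;T]$ and compatibly with the decoupling---is the technical heart of the argument and is exactly where the competing demands on $p$ and $q$ are balanced.

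Finally I optimise the three parameters. The decoupling error forces $q\asymp\log n$; smallness of the buffer variance forces $q\les p$; and minimising the Lyapunov contribution together with the smoothing term $C/T$ (where one takes $T\asymp L^{-1/\delta}$) fixes $p$ up to logarithmic factors. Collecting the estimates, the dominant contributions are all of order $n^{-\delta/2}$ up to a power of $\log n$ produced by the logarithmic block lengths entering $L$ and by the remainder term, and a careful accounting yields exactly the factor $(\log n)^{1+\delta}$. Assembling these bounds in Esseen's inequality gives the claimed estimate with a constant $A$ depending only on $K$, $\beta$ and $\delta$.
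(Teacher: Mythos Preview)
The paper does not prove this theorem at all: it is quoted verbatim from Tikhomirov's 1980 article and used as a black box in the proof of Theorem~\ref{theorem2}, where it is simply applied to the sequence $\{\eta_n,\;n\ges1\}$. Hence there is no ``paper's own proof'' to compare your attempt against.

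For what it is worth, your outline---Esseen's smoothing inequality, Bernstein blocking with long blocks of length $p$ and buffers of length $q\asymp\log n$, Volkonskii--Rozanov decoupling at cost $m\alpha^X(q)$, Lyapunov expansion of the product of block characteristic functions, and a priori Gaussian decay of $f_n$ to control the remainder---is indeed the skeleton of Tikhomirov's original argument, and the balance of parameters you describe is what produces the factor $(\log n)^{1+\delta}$. But this is a reconstruction of the cited reference, not of anything in the present paper.
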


Applying this theorem to the sequence $\{\eta_n,\; n\ges 1\}$ defined above and using equality~\eqref{equation6}, we obtain the desired result.
\end{proof}

\end{document}